\documentclass{article}
\usepackage[T1]{fontenc}

\usepackage{amsthm}
\usepackage{amsmath}
\usepackage{amssymb}
\usepackage{wrapfig}
\usepackage[lofdepth,lotdepth]{subfig}

\input{insbox}

\usepackage{hyperref}
\hypersetup{
    colorlinks=true,
    linkcolor=black,
    filecolor=magenta,
    citecolor = red,      
    urlcolor=cyan,
    pdftitle={Overleaf Example},
    pdfpagemode=FullScreen,
    }

\urlstyle{same}

\usepackage{wrapfig}
\usepackage{cutwin}

\usepackage{tkz-euclide}
\usepackage{pgf,tikz}
\usetikzlibrary{decorations.markings}
\usepackage{mathrsfs}
\usetikzlibrary{arrows}

\usepackage{multirow}

\usepackage{enumerate}
\usepackage{url}

\usepackage[framemethod=tikz]{mdframed}
\usepackage[a4paper, top = 20mm, left = 25mm, right = 25mm]{geometry}

\newtheorem{theorem}{Theorem}
\newtheorem*{corollary}{Corollary}
\newtheorem*{conj}{Conjecture}

\newcommand{\R}{\mathbb{R}}

\title{Constructing flexible polyhedra by twinning}
\author{Elvar Atlason and Simon Guest}
\date{October 2025}

\begin{document}

\maketitle

\begin{abstract} 
Polyhedra are generically rigid, but can be made to flex under certain symmetry conditions. We generalise Raoul Bricard's 1897 method for making flexible octahedra to construct an infinite family of flexible polyhedra with self-intersections. Removing an edge from any of these models gives a crinkle, and these can be used to create flexible polyhedra without self-interesection. We show this in a particular example, giving a flexible embedded polyhedron with a large range of motion. We also discuss a novel crinkle.
\end{abstract}

\tableofcontents

\section*{Introduction}

\begin{figure}
\centering
\includegraphics[width=0.7\textwidth]{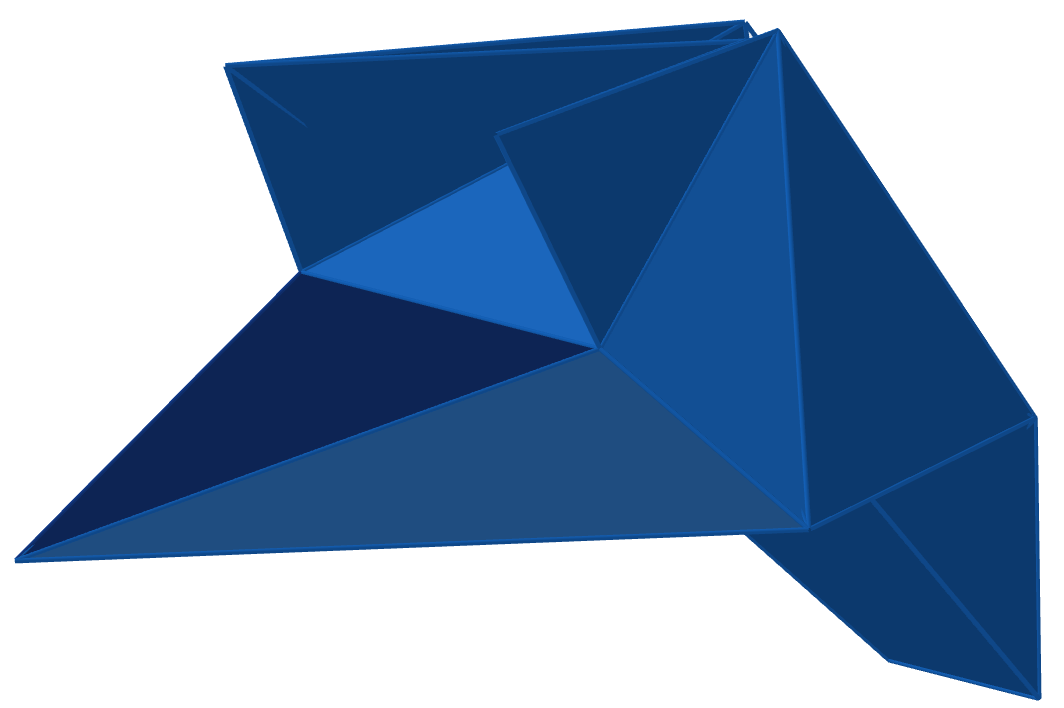}
\caption{An embedded flexible polyhedron, using a pentagonal crinkle arising from a twinned digonal anticupola. A net is given in figure \ref{net}, and a model is available online at \url{https://www.geogebra.org/m/fq3vdnmy}.}
\label{foxtrot}
\end{figure}

If you make a paper model of any convex polyhedron, it will not flex, despite the fact that each edge functions as a hinge. A simple counting argument shows that a generic polyhedron is rigid, and it was a longstanding conjecture that any embedded polyhedron would also be rigid. 

The rigidity conjecture for polyhedra was disproved by Robert Connelly in 1977, after he explicitly described an embedded flexible polyhedron in  \cite{Connelly_original}. The result attracted much interest, and the methods used would prove useful in the design of deployable structures. A discussion of some applications and relevance to engineering is given in the book \cite{SimonBook}. In the thesis \cite{Li}, Lijingjiao lists some known results, showing that not many examples of flexible polyhedra are known. Below, we will generalise a construction of Raoul Bricard from 1897, \cite{Bricard}, and introduce an infinite family of flexible polyhedra with self-intersections. This produces novel crinkles of infinitely many different topologies, and Connelly's methods can be applied to those to construct embedded polyhedra. We will do this in a particular case in section \ref{foxtrot_section}, leading to the fox-like flexible polyhedron shown in figure \ref{foxtrot}.

This paper is split into 3 sections. In section \ref{section_rigidity} we discuss the rigidity phenomenon for polyhedra. In section \ref{section_twinning} we describe the twinning method for constructing infinitely many new flexible polyhedra. In section \ref{section_examples} we work through some simple examples of twinned polyhedra.

\section{Rigidity}
\label{section_rigidity}

We say that a polyhedron can \textit{flex} if there is a non-trivial transformation of the vertices keeping all of the edge lengths fixed. Note that simply translating or rotating the polyhedron in $\R^3$ does not count as a flex.

The number of degrees of freedom of a collection of $V$ vertices connected in a generic way by $E$ bars in $\R^3$ is $3V-E$. Each vertex adds three degrees of freedom and each edge removes one. Note that if the edges are not aligned generically, there may be some redundant edges in the arrangement that fail to remove a degree of freedom. In a triangulated polyhedron, we have $V-E+F = 2$ and $3F= 2E$. This implies that the number of degrees of freedom of the polyhedron is $$3V-E = 6.$$ A rigid body in $\R^3$ has $3+3 = 6$ degrees of freedom, corresponding to translation in three independent directions and rotation in three independent angles. This heuristic count of the degrees of freedom suggests that a triangulated polyhedron is rigid. For a more rigorous version of this counting argument, see \cite{Gluck_genericity}.

The rigidity conjecture for polyhedra is already suggested by definition ten of book XI of Euclid's Elements\footnote{Equal and similar solid figures are those contained by similar planes equal in multitude and magnitude, \cite{EuclidsElementsHeath}.}. No doubt the question of rigidity held practical value in ancient Greece, as it does now to carpenters. Euclid is interested in rigidity, demonstrated by the proof of I.VIII, and VI.def I, but he deals with the topic in more detail in two dimensions. Later scholars have criticised his treatment of the problem of rigidity in three dimension, that of XI.def X, on account of it deserving the role of a proposition to be proved, and also because it turned out to be false. The definition he gives suggests an anticipation of the generic rigidity of polyhedra, and an understanding of how it differs from the two dimensional case. A further discussion of the work of Euclid can be found in Thomas L. Heath's notes in his translation, \cite{EuclidsElementsHeath}.

In a paper from 1813, \cite{Cauchys_rigidity}, Augustin Cauchy showed that convex polyhedra are rigid. See also \cite{Cauchys_Theorem_Proof} for a simple proof of this fact. However, in the paper \cite{Bricard}, Raoul Bricard described families of self-intersecting flexible octahedra that could be constructed out of rods. In 1977, Robert Connelly found a flexible embedded polyhedron. He did this by assembling Bricard octahedra to remove the self-intersections, see \cite{p1}. Further simplification by Klaus Steffen led to a simple working example of 9 vertices, discussed in \cite{Connelly_note}. 

Later analysis of flexible polyhedra has shown that during flexion, their volume remains constant. This was called the bellows conjecture, stating that a flexible polyhedron could not be used as bellows, and was first proved in \cite{Bellows_conjecture}. The proof constructs a polynomial from the combinatorial structure of the polyhedron, such that the volume of the polyhedron is a root. Since the roots of a polynomial form a discrete set, the volume must be constant while the polyhedron goes through a continuous flexing motion. This greatly restricts the type of motion that a flexible polyhedron can undergo.

Bricard's original octahedra come in three types. Type I and II use general symmetry arguments, whereas type III is more involved, relying on a ruling of the hyperboloid. We will focus on the first two types here. A discussion of the type III Bricard octahedron can be found in \cite{BricardIII_Guest}. Generalising the construction of Bricard's type I and II octahedra, we introduce a method called \textit{twinning} to create an infinite family of self-intersecting flexible polyhedra. Connelly's methods can then be applied to make models without self-intersections. Using the twinning construction, we explicitly construct a new variation of crinkle, as well as a novel flexible polyhedron without self-intersections. The models described have a large range of motion and show some interesting properties, which could be of use in structural engineering.

\section{Twinning}
\label{section_twinning}
We generalise Bricard's flexible octahedra of types I and II. For any triangulated polyhedron $P$ containing a symmetric quadrilateral $ABA'B'$, we remove the edge $AA'$ to create a polyhedral cap on the quadrilateral base $ABA'B'$. If the polyhedron $P$ was previously rigid, this cap will have exactly one degree of freedom. We then make two copies of the cap, rotate one of them, and glue the two copies together along their common boundary to create a twinned polyhedron $T$.

\subsection{Type I}
The type I Bricard octahedron relies on the following theorem.

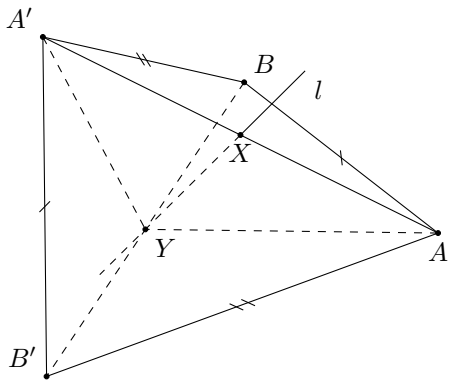
\begin{figure}\centering
\begin{tikzpicture}[line join=bevel,scale = 1.3,tick/.style={
    postaction={decorate},
    decoration={
      markings,
      mark=at position 0.5 with {
        \draw[-] (-2pt,2pt) -- (2pt,-2pt);
      }
    }
  }, doubletick/.style={
    postaction={decorate},
    decoration={
      markings,
      mark=at position 0.485 with {
        \draw[-] (-2pt,2pt) -- (2pt,-2pt);
      },  mark=at position 0.515 with {
        \draw[-] (-2pt,2pt) -- (2pt,-2pt);
      }
    }
  }]
\coordinate (B) at (1,1.5,1.5);
\coordinate (A) at (2,-1,-1);
\coordinate(B') at (-1,-1.5,1.5);
\coordinate (A') at (-2,1,-1);
\coordinate(Y) at (0,0,1.5);
\coordinate(X) at (0,0,-1);
\coordinate(X') at (0,0,2.7);
\coordinate(Y') at (0,0,-2.7);
\draw[tick](A)--(B);
\draw[doubletick](A')--(B);
\draw[tick](A')--(B');
\draw[doubletick](B')--(A);
\draw[dashed](B)--(B');
\draw(A)--(A');
\draw[dashed](A')--(Y);
\draw[dashed](A)--(Y);
\draw[fill=black] (A) circle [radius=0.7pt] node[below] {$A$};
\draw[fill=black] (B) circle [radius=0.7pt] node[above right] {$B$};
\draw[fill=black] (A') circle [radius=0.7pt] node[above left] {$A'$};
\draw[fill=black] (B') circle [radius=0.7pt] node[above left] {$B'$};
\draw[fill=black] (X) circle [radius=0.7pt] node[below] {$X$};
\draw[fill=black] (Y) circle [radius=0.7pt] node[below right] {$Y$};
\draw[dashed] (X)--(X');
\draw(X)--(Y');
\draw (Y') node[ below right]{$l$};
\end{tikzpicture}
\caption{The quadrilateral $ABA'B'$ has rotational symmetry.}
\label{theorem1_fig}
\end{figure}

\begin{theorem}
\label{line_symmetry}
Let $A$, $B$, $A'$, $B'$ be four points in $\mathbb{R}^3$, such that $AB = A'B'$ and $AB' = A'B$. Then there exists a line $l$ in $\mathbb{R}^3$ such that a half-rotation in $l$ swaps $A$ with $A'$ and $B$ with $B'$.
\end{theorem}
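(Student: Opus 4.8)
The plan is to find the line $l$ explicitly as the common perpendicular bisector axis of the two segments $AA'$ and $BB'$, and then verify that the half-rotation about it does what we want. The conditions $AB = A'B'$ and $AB' = A'B$ say that the quadrilateral $ABA'B'$ has matching opposite "diagonal-pair" distances, which is exactly the data one expects to force a $180^\circ$ rotational symmetry swapping the primed and unprimed vertices.

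First I would reduce to the key geometric fact: a half-rotation (rotation by $\pi$) about a line $l$ swaps two points $P$ and $P'$ if and only if $l$ passes through the midpoint of $PP'$ and is perpendicular to $PP'$. So the line we want, if it exists, must pass through both midpoints $M = \tfrac{1}{2}(A+A')$ and $N = \tfrac{1}{2}(B+B')$, and must be perpendicular to both $AA'$ and $BB'$. My first step is therefore to show that the line $l$ through $M$ and $N$ is genuinely perpendicular to each of these two segments; this is where the hypotheses get used. The natural tool is the polarization identity: the condition that $l$ (in direction $N-M$) is perpendicular to $A-A'$ is $\langle N-M,\, A-A'\rangle = 0$, and expanding $N-M = \tfrac12(B+B'-A-A')$ and using $|A-B|^2 = |A'-B'|^2$ together with $|A-B'|^2 = |A'-B|^2$ should collapse the inner products exactly.

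Concretely, I would compute $\langle B+B'-A-A',\, A-A'\rangle$ by expanding into the four squared distances $AB$, $A'B'$, $AB'$, $A'B$, plus terms involving $|A|^2-|A'|^2$ that cancel; the two hypotheses are precisely what is needed to kill the cross terms, giving orthogonality with $AA'$, and an identical computation gives orthogonality with $BB'$. Once I know $l$ is perpendicular to both $AA'$ and $BB'$ and passes through both midpoints, the half-rotation about $l$ maps $A \mapsto A'$, $A' \mapsto A$, $B \mapsto B'$, $B' \mapsto B$ directly from the characterization in the reduction step, completing the proof.

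The main obstacle I anticipate is a subtle degeneracy: the argument tacitly assumes $M \neq N$, so that "the line through $M$ and $N$" is well-defined, and also that the claimed perpendicularity conditions actually pin down a single line rather than being vacuous. If $M = N$ (the two midpoints coincide) then any axis through that common midpoint perpendicular to the plane of the configuration works, so the statement still holds but needs a separate word; and I should check that the perpendicularity relations, being two linear conditions on the direction, are consistent with the direction $N-M$ rather than over-determining it. I expect the cleanest writeup to verify orthogonality via the polarization computation and then handle the coincident-midpoint case by choosing $l$ perpendicular to the plane spanned by the four points, noting that the hypotheses force that configuration to itself be symmetric.
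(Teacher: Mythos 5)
Your proposal is correct, and the polarization computation at its heart does work as you predict: writing the hypotheses as $|A-B|^2 = |A'-B'|^2$ and $|A-B'|^2 = |A'-B|^2$, their sum collapses $\langle B+B'-A-A',\, A-A'\rangle$ to zero, and their difference does the same for $\langle B+B'-A-A',\, B-B'\rangle$. The skeleton of your argument is the same as the paper's: both take the axis to be the line through the midpoints of $AA'$ and $BB'$, both reduce the theorem to showing this line is perpendicular to each diagonal, and both split off a degenerate case. Where you genuinely differ is the verification: the paper argues synthetically, using the congruence of triangles $ABB'$ and $A'B'B$ to get equal medians $AY = A'Y$, then the congruence $\triangle AXY \cong \triangle A'XY$ to conclude $XY \perp AA'$, whereas you get both perpendicularities from one inner-product expansion. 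Your algebraic route is more uniform and easier to make airtight (no auxiliary triangles, no appeal to corresponding medians); the paper's route stays in the classical style of Bricard's original setting. One further point of comparison: your degenerate case and the paper's are actually the same case in disguise. Under the hypotheses, the segments $AA'$ and $BB'$ intersect exactly when the two midpoints coincide, since an intersecting configuration is a planar parallelogram whose diagonals bisect each other. Your handling of it is slightly cleaner than the paper's, because coincident midpoints already give central symmetry through that point without using the hypotheses at all, so any axis through it perpendicular to the plane of the four points works (with a word needed if the four points are collinear, where any perpendicular axis through the common midpoint still does the job).
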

\begin{proof} Let the four points $A$, $B$, $A'$, $B'$ be as described above.

In the case when the diagonals $AA'$ and $BB'$ intersect, the quadrilateral $ABA'B'$ is planar, and thus a parallelogram. It has a rotational symmetry in a line perpendicular to the plane of the four points.

Assuming the diagonals do not intersect, let $X$ be the midpoint of $AA'$ and $Y$ be the midpoint of $BB'$, as shown in figure \ref{theorem1_fig}. Let $l$ be the line through $X$ and $Y$.

Since $\triangle{ABB'} \cong \triangle{BA'B'}$, and $Y$ is the midpoint of $BB'$, we have $$AY = A'Y,$$ since they are corresponding medians in congruent triangles.  Therefore, we have $\triangle{AXY} \cong \triangle{A'XY}$, since the corresponding sides in these triangles are equal. In particular, $$\angle{AXY} = \angle{A'XY},$$ and since $A',X,A$ lie on a line, we have $$XY \perp AA'.$$

Similarly, we obtain $XY \perp BB',$ and so it follows that $l$ is the line of symmetry of the quadrilateral $ABA'B'$.
\end{proof}

Let $ABA'B'$ be a quadrilateral in $\R^3$ such that the edges satisfy $AB = A'B'$ and $AB' = A'B$, as in the theorem above. To construct the Bricard octahedron of type I, call the line of symmetry $l$, and pick some point $C$ not on $l$. We then draw the edges $CA,CB,CA',CB'$, to create a pyramid with a missing bottom. This is a chain of four triangles, and it has a single degree of freedom. Throughout the flexing motion, the base $ABA'B'$ of the pyramid has a line of symmetry, $l$. If we rotate the pyramid in that line, we send $C$ to $C'$, $A$ to $A'$ and $B$ to $B'$. Note that by construction, the base is fixed by this rotation. So we can glue the rotated pyramid onto the original pyramid to obtain a closed octahedron. This is a flexible polyhedron with self-intersection, the first of three types of flexible octahedra described by Raoul Bricard in 1897, \cite{Bricard}. See a depiction in figure \ref{Bricard octahedron}. A model is also available at \url{https://www.geogebra.org/m/pshaep9q}.

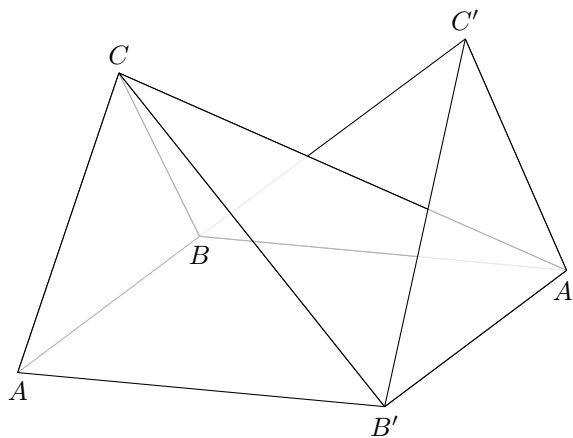
\begin{figure}
\centering
\begin{tikzpicture}[line join = bevel,scale = 0.55\textwidth/3cm]
\coordinate (A1) at (-0.8,0,-1);
\coordinate (A2) at (-1,0,0.6);
\coordinate (A3) at (0.8,0,1);
\coordinate (A4) at (1,0,-0.6);
\coordinate (B1) at (-0.7,1.2,0.2);
\coordinate (C1) at (0.7,1.2,-0.2);

\draw [fill opacity=1,fill=white](A1) -- (A2) -- (B1) -- cycle;
\draw [fill opacity=1,fill=white](A4) -- (A1) -- (C1) -- cycle;
\draw [fill opacity=0.7,fill=white] (A4) -- (A1) -- (B1) -- cycle;
\draw [fill opacity=0.7,fill=white] (A2) -- (A3) -- (B1) -- cycle;
\draw [fill opacity=0.7, fill=white] (A3) -- (A4) -- (B1) -- cycle;
\draw [fill opacity=0.7,fill=white] (A3) -- (A4) -- (C1) -- cycle;

\node[below] at (A1) {$B$};
\node[below] at (A2) {$A$};
\node[below] at (A3) {$B'$};
\node[below] at (A4) {$A'$};
\node[above] at (B1) {$C$};
\node[above] at (C1) {$C'$};
\end{tikzpicture}
\caption{The Bricard octahedron, or twinned pyramid. Two faces have been omitted to remove the self-inersections, creating a crinkle.}
\label{Bricard octahedron}
\end{figure}

More generally, let $P$ be a rigid polyhedron with two faces $\triangle{ABA'}$ and $\triangle{A'B'A}$ along the edge $AA'$, such that $AB = A'B'$ and $AB' = BA'$. The computation in the introduction shows that for rigidity to occur, every edge must restrict the motion. So if we remove the edge $AA'$, the resulting polyhedral cap on a quadrilateral hole has one degree of freedom. Throughout the flexing motion of this cap, the boundary $ABA'B'$ retains a rotational symmetry in some line $l$ by theorem \ref{line_symmetry}. Let $P_{cap}$ be the polyhedral cap obtained from $P$ after removing the edge $AA'$. Let $P'_{cap}$ be a copy of the polyhedral cap $P_{cap}$, obtained by rotating the polyhedral cap $P_{cap}$ by $180^{\circ}$ around the line $l$. By construction, the boundary of $P_{cap}$ aligns with the boundary of $P'_{cap}$, so we can glue $P_{cap}$ to $P'_{cap}$ along the boundary, obtaining a flexible polyhedron, $T$. The construction for a type I twinning can be realised in the following geogebra file, \url{https://www.geogebra.org/m/vabhua2b}, here, by letting the slider \textit{distance} go to zero, the two caps are glued together along their common quadrilateral bases.

A quadrilateral in $\R^3$ has $3\cdot 4-4 = 8$ degrees of freedom, so disregarding the Euclidian symmetry group, there are two flexing degrees of freedom. The hole we create in $P$ by removing a single edge has one degree of freedom, so it may be regarded as a path in the two-dimensional space of all realisations of the quadrilateral. In other words, the polyhedral cap on top of the quadrilateral restricts it from having two degrees of freedom to just one. By copying the cap and rotating it, we obtain a different cap on the quadrilateral base which has exactly the same movement, allowing us to glue the two caps together to obtain a closed polyhedron.

We call $ABA'B'$ the \textit{equator} of $T$, and we call this action \textit{twinning}\footnote{The wording is borrowed from crystallography, a twinned crystal is a pair of crystals growing in different directions from the same base.}. We call $T$ the \textit{twinned} polyhedron of the original polyhedron $P$ around the equator $ABA'B'$. 

Let us summarise this in a theorem.

\begin{theorem}
\label{formal_statement_I}
Let $P$ be a polyhedron such that the edges form an isostatic framework. Assume that there are some faces $\triangle{ABA'}$ and $\triangle{BA'B'}$ in $P$, such that $AB = A'B'$ and $AB' = BA'$. Let $T$ be a twinned polyhedron of $P$ around the equator $ABA'B'$. Then $T$  has a single flex.
\end{theorem}

\begin{corollary}
If $P$ is convex, the twinned polyhedron $T$ has a single flex.
\end{corollary}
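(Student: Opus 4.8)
The plan is to derive the corollary directly from Theorem \ref{formal_statement_I}. Twinning around the equator $ABA'B'$ already presupposes the two symmetric faces $\triangle ABA'$ and $\triangle BA'B'$ with $AB = A'B'$ and $AB' = BA'$, so the only hypothesis left to check is that the edges of $P$ form an isostatic framework. The whole task therefore reduces to showing that a convex (triangulated) polyhedron is isostatic, after which Theorem \ref{formal_statement_I} supplies the single flex of $T$ with no further work.

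First I would arrange that every face of $P$ is a triangle, subdividing any larger face by diagonals drawn in its supporting plane; this keeps $P$ convex and leaves the distinguished faces $\triangle ABA'$ and $\triangle BA'B'$, hence the equator, untouched. With all faces triangular, rigidity of $P$ as a panel structure coincides with rigidity of its bar-and-joint edge framework. Cauchy's theorem, \cite{Cauchys_rigidity}, then applies: $P$ admits no non-trivial isometric deformation, so the edge framework is rigid. Next I would use the counting from the introduction, combining $V-E+F=2$ with $3F=2E$ to get $E = 3V-6$, which is exactly the number of bars a minimally rigid framework on $V$ vertices in $\R^3$ can carry.

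It remains to upgrade rigidity to isostaticity, that is, to rule out redundant edges, and I expect this to be the main obstacle. A rigid framework meeting the lower bound $E = 3V-6$ should carry no redundant bar and hence be isostatic, but the subtlety is that Cauchy's theorem yields continuous rigidity, whereas independence of the edges is naturally a statement about infinitesimal rigidity, or equivalently the absence of a self-stress. The clean way to close this gap is to invoke the stronger infinitesimal rigidity of convex triangulated polyhedra: the rigidity matrix then has rank $3V-6 = E$, so its rows, the edges, are independent and $P$ is isostatic. Alternatively one argues that a redundant edge would leave $P$ rigid on only $3V-7$ bars, below the generic minimum. Once this passage from Cauchy-rigidity to isostaticity is established, Theorem \ref{formal_statement_I} finishes the proof.
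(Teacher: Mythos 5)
Your proposal is correct, and its skeleton matches the paper's intended justification: the corollary is stated there without an explicit proof, and the implicit route is exactly your reduction --- convexity gives rigidity by Cauchy's theorem \cite{Cauchys_rigidity}, the count $E = 3V-6$ for triangulated polyhedra then makes the edge framework isostatic, and Theorem \ref{formal_statement_I} supplies the flex. Where you go beyond the paper is in the passage from rigidity to isostaticity. The paper's surrounding text handles this with the heuristic degree-of-freedom count (``for rigidity to occur, every edge must restrict the motion''), which silently conflates continuous rigidity with infinitesimal rigidity; you correctly isolate this as the genuine issue, since isostaticity --- independence of the rows of the rigidity matrix --- is an infinitesimal notion, and Cauchy's theorem only yields the continuous one. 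Your fix, invoking infinitesimal rigidity of convex triangulated polyhedra (Dehn's theorem, and in the form you actually need --- faces subdivided by diagonals with no new vertices --- its extension by Alexandrov), is the right key lemma and makes the corollary rigorous where the paper leaves a gap. One caveat: your closing ``alternative'' argument is weaker than you suggest. Removing an infinitesimally redundant edge from a framework that is rigid but \emph{not} infinitesimally rigid need not preserve continuous rigidity, so the claimed contradiction with a rigid framework on $3V-7$ bars does not come for free; that shortcut re-imports the very conflation you flagged, and you should rely on the Dehn--Alexandrov route instead.
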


The polyhedron $P$ is not a generic polyhedron as we have restricted some edge lengths to be equal, however, we do not expect that to affect the generic case. We therefore have a conjecture.

\begin{conj}[Generic case] For almost all triangulated polyhedra $P$ with $AB=A’B’$ and $AB'=BA’$, the twinned polyhedron $T$ of $P$ has a single flex.
\end{conj}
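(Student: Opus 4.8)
The plan is to deduce this generic statement directly from Theorem \ref{formal_statement_I}, which already settles the case in which $P$ is isostatic. The only gap between the two statements is the isostatic hypothesis: once $P$ carries the two symmetric faces and its bar framework is isostatic, Theorem \ref{formal_statement_I} gives that the twin $T$ has a single flex. So it suffices to prove that \emph{almost every} triangulated polyhedron $P$ obeying the metric constraints $AB = A'B'$ and $AB' = BA'$ is isostatic; the conjecture then follows by applying Theorem \ref{formal_statement_I} at each point outside a negligible set. This is exactly the expectation voiced before the conjecture, namely that restricting two pairs of edge lengths to be equal should not affect generic rigidity.

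To make ``almost every'' precise I would fix a combinatorial triangulation $\mathcal{T}$ of the sphere, carrying the two distinguished adjacent faces, and work in the realisation space $\R^{3V}$ of placements of its $V$ vertices. The equalities $AB = A'B'$ and $AB' = BA'$ cut out an algebraic subvariety $\mathcal{C} \subseteq \R^{3V}$ of codimension at most two. Since $\mathcal{T}$ is a triangulation, $E = 3V-6$, so a realisation is isostatic precisely when its rigidity matrix has full row rank $3V-6$; failure is the simultaneous vanishing of all maximal minors, so the non-isostatic realisations form a closed algebraic set $\mathcal{F} \subseteq \R^{3V}$, which is a \emph{proper} subvariety by Gluck's generic rigidity theorem \cite{Gluck_genericity}. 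The conjecture for the type $\mathcal{T}$ then reduces to showing $\mathcal{C} \not\subseteq \mathcal{F}$ on each irreducible component of $\mathcal{C}$: where this holds, $\mathcal{C}\setminus\mathcal{F}$ is dense and open in that component, so isostaticity fails only on a lower-dimensional, hence measure-zero, subset.

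A first foothold is provided by the Corollary. Any convex realisation of $\mathcal{T}$ respecting the constraints is rigid by Cauchy's theorem, and therefore, being convex and triangulated, isostatic; such a realisation lies in $\mathcal{C}\setminus\mathcal{F}$. Convex, constraint-respecting realisations exist whenever $\mathcal{T}$ admits a convex placement with a half-turn symmetry swapping $A \leftrightarrow A'$ and $B \leftrightarrow B'$, since such a symmetry forces $AB=A'B'$ and $AB'=BA'$ automatically. This certifies $\mathcal{C} \not\subseteq \mathcal{F}$ on the component containing these examples.

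The hard part, and the reason the statement is only conjectural, is to control \emph{all} of $\mathcal{C}$. The equalities $AB=A'B'$ and $AB'=BA'$ are precisely the symmetry-flavoured conditions that, throughout this paper, are responsible for \emph{creating} flexibility, so one must rule out that imposing them systematically drops the rank of the rigidity matrix, i.e. that some component of $\mathcal{C}$ is swallowed entirely by $\mathcal{F}$. Gluck's theorem speaks only about generic \emph{unconstrained} realisations and does not transfer to the subvariety $\mathcal{C}$. The crux I would attempt is a transversality or inductive argument: build $\mathcal{T}$ up by vertex splits from a small constrained base case whose isostaticity is checkable by hand, and show that each split preserves full rank of the rigidity matrix along $\mathcal{C}$, thereby guaranteeing that every positive-dimensional component of $\mathcal{C}$ meets the isostatic locus. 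Carrying this out uniformly over all combinatorial types, so that the flexible exceptions are confined to a measure-zero set, is the main obstacle.
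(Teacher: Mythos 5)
The first thing to say is that the paper itself offers no proof of this statement: it is deliberately stated as a conjecture, prefaced by ``we do not expect that to affect the generic case,'' precisely because the authors could not rule out that the constraints $AB=A'B'$, $AB'=BA'$ interact badly with generic rigidity. Your proposal correctly identifies the right reduction --- by Theorem \ref{formal_statement_I} it suffices to show that almost every constrained realisation is isostatic, and since a triangulated sphere has $E=3V-6$, isostaticity is equivalent to full rank of the rigidity matrix, so the problem becomes showing that no irreducible component of the constraint variety $\mathcal{C}$ lies inside the non-isostatic locus $\mathcal{F}$ --- but you then stop exactly where the conjecture begins. Gluck \cite{Gluck_genericity} gives $\mathcal{F}\subsetneq\R^{3V}$, but says nothing about $\mathcal{C}\cap\mathcal{F}$, and your two attempts to bridge this are both incomplete. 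Your ``foothold'' via Cauchy-rigid convex realisations requires the combinatorial triangulation to admit an automorphism swapping $A\leftrightarrow A'$ and $B\leftrightarrow B'$ that can be realised as a half-turn of a convex placement; a generic triangulation of the sphere has no such automorphism at all, so this foothold is empty for most combinatorial types covered by the conjecture (a perturbation argument placing $A'$ on the intersection of two spheres about $B$ and $B'$ inside a convex realisation would be a more promising way to exhibit points of $\mathcal{C}\setminus\mathcal{F}$, but you do not make it). And even where the foothold exists, it certifies only the component of $\mathcal{C}$ containing those symmetric realisations, as you concede.

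The second proposed bridge --- building up the triangulation by vertex splits and showing each split preserves full rank of the rigidity matrix \emph{along} $\mathcal{C}$ --- is stated as a plan, not carried out: you give no base case, no verification that a vertex split can be performed compatibly with the two length constraints, and no argument that the split preserves rank on the constrained variety rather than merely generically in $\R^{3V}$ (the latter is known, the former is not). Your own closing sentence acknowledges this is ``the main obstacle.'' So what you have written is a sensible research programme that makes the paper's implicit heuristic precise, but it is not a proof; the crux it leaves open is exactly the content of the conjecture, which remains open in the paper as well.
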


\begin{figure}
\centering
\subfloat{
\begin{tikzpicture}[line join=bevel,z=-10, scale = 2.8]
\coordinate (A1) at (0,0,-1);
\coordinate (A2) at (-1,0,0);
\coordinate (A3) at (0,0,1);
\coordinate (A4) at (1,0,0);
\coordinate (C1) at (-0.3,1.1,0.5);
\coordinate (C2) at (0.3, 1,0.5);

\draw[fill opacity=0.7,fill=white](A4)--(C2)--(A1)--cycle;
\draw[fill opacity=0.7,fill=white](A1)--(C1)--(C2)--cycle;
\draw[fill opacity=0.7,fill=white] (A1)--(C1)--(A2)--cycle;
\draw [fill opacity=0.7,fill=white]  (A3)--(C1)--(A2)--cycle;
\draw[fill opacity=0.7,fill=white] (A3)--(C1)--(C2)--cycle;
\draw[fill opacity=0.7,fill=white] (A3)--(C2)--(A4)--cycle;
\draw[blue](A1)--(A2)--(A3)--(A4)--cycle;

\end{tikzpicture}}
\qquad
\subfloat{
\begin{tikzpicture}[line join=bevel,z=-10, scale = 2.8]
\coordinate (A1) at (0,0,-1);
\coordinate (A2) at (-1,0,0);
\coordinate (A3) at (0,0,1);
\coordinate (A4) at (1,0,0);
\coordinate (B1) at (-0.3,1,-0.5);
\coordinate (B2) at (0.3, 1.1,-0.5);
\coordinate (C1) at (-0.3,1.1,0.5);
\coordinate (C2) at (0.3, 1,0.5);

\draw (A1) -- (A2) -- (B1) -- cycle;
\draw (B1) -- (A1) -- (B2) -- cycle;
\draw (A4) -- (A1) -- (B2) -- cycle;
\draw[fill opacity=0.7,fill=white](B2) -- (B1) -- (A3) -- cycle;
\draw[fill opacity=0.7,fill=white] (B1) --(A2)--(A3)--cycle;
\draw[fill opacity=0.7,fill=white](A3)--(B2)--(A4)--cycle;
\draw(A4)--(C2)--(A1)--cycle;
\draw(A1)--(C1)--(C2)--cycle;
\draw(A1)--(C1)--(A2)--cycle;
\draw [fill opacity=0.7,fill=white]  (A3)--(C1)--(A2)--cycle;
\draw[fill opacity=0.7,fill=white] (A3)--(C1)--(C2)--cycle;
\draw[fill opacity=0.7,fill=white] (A3)--(C2)--(A4)--cycle;
\draw[blue](A1)--(A2)--(A3)--(A4)--cycle;
\end{tikzpicture}}
\qquad
\subfloat{
\begin{tikzpicture}[line join=bevel,z=-10, scale = 2.8]
\coordinate (A2) at (-1,0,0);
\coordinate (A3) at (0,0,1);
\coordinate (A4) at (1,0,0);
\coordinate (B1) at (-0.3,1,-0.5);
\coordinate (B2) at (0.3, 1.1,-0.5);
\coordinate (C1) at (-0.3,1.1,0.5);
\coordinate (C2) at (0.3, 1,0.5);

\draw[fill opacity=0.7,fill=white](B2) -- (B1) -- (A3) -- cycle;
\draw[fill opacity=0.7,fill=white] (B1) --(A2)--(A3)--cycle;
\draw[fill opacity=0.7,fill=white](A3)--(B2)--(A4)--cycle;
\draw[thick, red](A2)--(B1)--(B2)--(A4)--(C2)--(C1)--cycle;
\draw [fill opacity=0.7,fill=white]  (A3)--(C1)--(A2)--cycle;
\draw[fill opacity=0.7,fill=white] (A3)--(C1)--(C2)--cycle;
\draw[fill opacity=0.7,fill=white] (A3)--(C2)--(A4)--cycle;
\draw[thick, red](A4)--(C2)--(C1)--(A2);
\end{tikzpicture}}
\qquad
\subfloat{
\begin{tikzpicture}[line join=bevel,z=-10, scale = 3]
\coordinate (A1) at (0,0,-1);
\coordinate (A2) at (-1,0,0);
\coordinate (A3) at (0,0,1);
\coordinate (A4) at (1,0,0);
\coordinate (B1) at (-0.3,1,-0.5);
\coordinate (B2) at (0.3, 1.1,-0.5);
\coordinate (C1) at (-0.3,1.1,0.5);
\coordinate (C2) at (0.3, 1,0.5);

\draw (A1) -- (A2) -- (B1) -- cycle;
\draw (B1) -- (A1) -- (B2) -- cycle;
\draw (A4) -- (A1) -- (B2) -- cycle;
\draw[fill opacity=0.7,fill=white](B2) -- (B1) -- (A3) -- cycle;
\draw[fill opacity=0.7,fill=white] (B1) --(A2)--(A3)--cycle;
\draw[fill opacity=0.7,fill=white](A3)--(B2)--(A4)--cycle;
\draw(A4)--(C2)--(A1)--cycle;
\draw(A1)--(C1)--(C2)--cycle;
\draw(A1)--(C1)--(A2)--cycle;
\draw[thick, red](A2)--(B1)--(B2)--(A4)--(C2)--(C1)--cycle;
\draw [fill opacity=0.7,fill=white]  (A3)--(C1)--(A2)--cycle;
\draw[fill opacity=0.7,fill=white] (A3)--(C1)--(C2)--cycle;
\draw[fill opacity=0.7,fill=white] (A3)--(C2)--(A4)--cycle;
\draw[thick, red](A4)--(C2)--(C1)--(A2);
\end{tikzpicture}}
\caption{A twinned digonal anticupola and its twinned version, obtained by removing a diagonal of the square base with blue boundary and rotating a copy around its axis of symmetry. The equator is depicted in blue. Below, see how the equator may also be regarded as the red hexagonal boundary of a twinned hexagonal cone.}
\label{twinned_digonal_anticupola}
\end{figure}
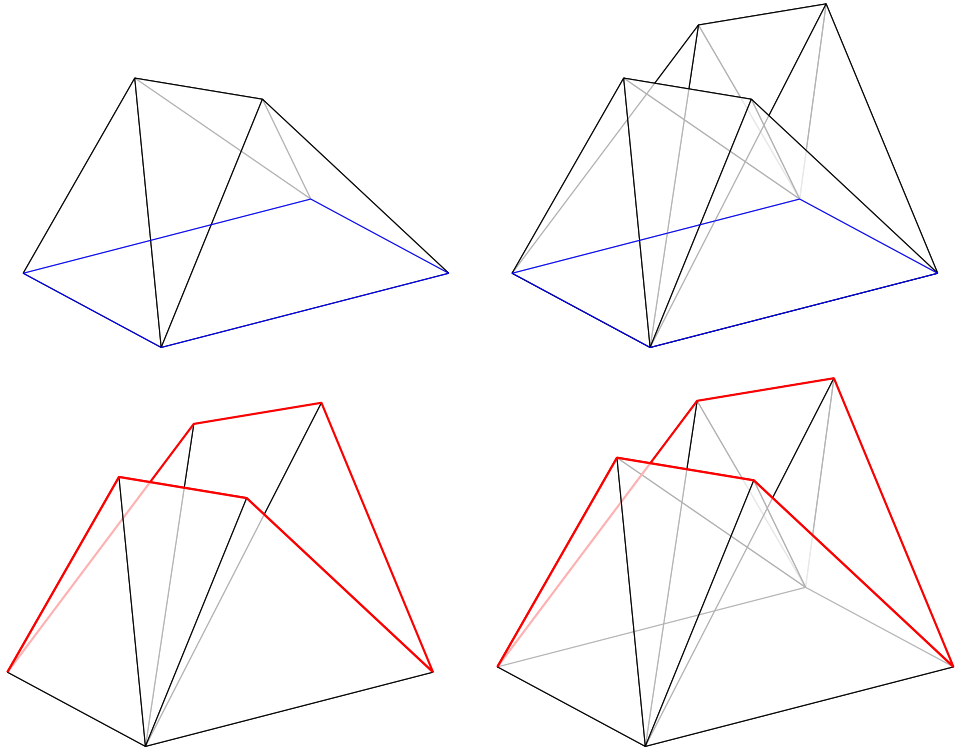

Our method works on any shape with a symmetric boundary such that the symmetry is preserved in the flexing motion. The cases here described have quadrilateral boundaries. Larger boundaries may be achieved as the following simple example shows. Take a digonal anticupola on a square, remove the square bottom, and twin it on its square boundary, as shown in figure \ref{twinned_digonal_anticupola}. This has a flexing motion, and the equator is coloured blue. Note that in the final model, the equator may not be uniquely defined, and various different boundaries of symmetry may be used. To illustrate this, we have coloured a different, possible equator in the same figure in red. Working backwards, we see that this shape may also be realised as a twinned hexagonal cone. In twinning the cone, the symmetry on the base is quite difficult to realise, and the first explanation gives a clearer view of the flexibility.

Above, we have generalised the method for constructing the Bricard type I octahedron. We get infinitely many examples, where every polyhedron $P$ with the required symmetry property of two adjacent triangular faces gives rise to a flexible polyhedron. The Bricard type I octahedron is part of this family, it is the twinned pyramid.

All examples described by this method have zero volume, as the rotated cap has a negative volume equal in size to the original cap. Also note that if $P$ was symmetric in the line $l$, the construction simply gives a double cover of $P$, but in the generic case, this will not happen. Since the polyhedron has zero volume, it must self-intersect. This can also be seen in the following way. The line $l$ goes through the polyhedron $P$ at the edge $AA'$, so it must intersect the polyhedron again at some point. This point will lie on both $P$ and $P'$, which is where it must self-intersect. We will discuss how to eliminate these self-intersections in many cases, and give an example of a novel flexible polyhedron without self-intersections.

\subsection{Type II}
In an analogous way, we can adapt the method of the Bricard type II octahedron. The construction of the type II octahedron is based on the following theorem, which has a similar proof to theorem \ref{line_symmetry} above.

\begin{theorem}
Let $A$, $B$, $A'$, $B'$ be four points in $\mathbb{R}^3$ such that $AB = AB'$ and $A'B = A'B'$. Then there exists a plane $\pi$ in $\mathbb{R}^3$ through $A$ and $A'$, such that reflecting in $\pi$ swaps $B$ and $B'$.
\end{theorem}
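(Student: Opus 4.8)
The plan is to exhibit $\pi$ explicitly as the perpendicular bisector plane of the segment $BB'$, mirroring the role that the line through the two midpoints played in Theorem \ref{line_symmetry}. The key observation is that the two hypotheses $AB = AB'$ and $A'B = A'B'$ say precisely that each of $A$ and $A'$ is equidistant from $B$ and $B'$, i.e. that both points lie on the locus of points equidistant from $B$ and $B'$, which is exactly a plane.

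First I would dispose of the degenerate case $B = B'$, where the statement is trivial: any plane through $A$, $A'$ and the common point $B = B'$ works, since the reflection fixes $B$ and so swaps it with itself. So assume $B \neq B'$, let $M$ be the midpoint of $BB'$, and let $\pi$ be the plane through $M$ perpendicular to the line $BB'$; this is the perpendicular bisector plane, the set of all points equidistant from $B$ and $B'$. Next I would verify that $A, A' \in \pi$, in the style of Theorem \ref{line_symmetry}: since $\triangle ABB'$ is isosceles with $AB = AB'$, the median $AM$ to the base is also the altitude, so $AM \perp BB'$, and the same argument applied to $\triangle A'BB'$ gives $A'M \perp BB'$. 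Hence both $A$ and $A'$ lie in the plane through $M$ orthogonal to $BB'$, which is $\pi$; in particular $\pi$ passes through $A$ and $A'$ as required.

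To finish, I would observe that reflection in $\pi$ fixes every point of $\pi$, and therefore fixes $A$ and $A'$, while by construction $\pi$ is the perpendicular bisector plane of $BB'$, so the reflection interchanges $B$ and $B'$. This gives the desired isometry.

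I expect the only real subtlety to lie in the bookkeeping around degenerate configurations. Beyond the case $B = B'$, one should note that the argument does \emph{not} require $A$, $A'$, $M$ to be non-collinear, unlike a naive ``plane through three points'' construction: the plane $\pi$ is pinned down by $B$ and $B'$ alone, and the two equidistance conditions force $A$ and $A'$ into it regardless of their relative position. Identifying $\pi$ as the perpendicular bisector plane, rather than some ad hoc plane through the four points, is what makes this clean, and it is the main point to state carefully.
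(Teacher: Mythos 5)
Your proof is correct, and it inverts the logical structure of the paper's argument in a way worth noting. The paper defines $\pi$ as the plane through $A$, $A'$, and the midpoint $M$ of $BB'$, and then proves $BB' \perp \pi$ using the two isosceles triangles; because that plane is only well-defined when $A$, $A'$, $M$ are not collinear, the paper must first split off the coplanar case, where it observes that $ABA'B'$ is a kite and takes a plane perpendicular to the plane of the quadrilateral. You instead define $\pi$ intrinsically from $B$ and $B'$ alone, as the perpendicular bisector plane of $BB'$, so the reflection swapping $B$ and $B'$ is automatic, and the hypotheses $AB = AB'$, $A'B = A'B'$ place $A$ and $A'$ on $\pi$ via the equidistance characterization of that plane. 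The underlying geometry is the same (median of an isosceles triangle is an altitude, applied at $M$), but your decomposition eliminates the coplanar/non-coplanar case split entirely, replaces it with the single genuine degeneracy $B = B'$ (which the paper does not address), and is robust even when $A$, $A'$, $M$ are collinear. What the paper's version buys in exchange is structural parallelism with its Theorem \ref{line_symmetry}, where the symmetry line is likewise built through midpoints, and a classical synthetic flavour; but as a standalone argument yours is the cleaner one.
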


\begin{proof}
If the four points are coplanar, the quadrilateral $ABA'B'$ is a kite, and so a plane perpendicular to the plane of the quadrilateral will work.

\begin{figure}\centering
\begin{tikzpicture}[line join=bevel,scale = 1.3,tick/.style={
    postaction={decorate},
    decoration={
      markings,
      mark=at position 0.5 with {
        \draw[-] (-2pt,2pt) -- (2pt,-2pt);
      }
    }
  }, doubletick/.style={
    postaction={decorate},
    decoration={
      markings,
      mark=at position 0.485 with {
        \draw[-] (-2pt,2pt) -- (2pt,-2pt);
      },  mark=at position 0.515 with {
        \draw[-] (-2pt,2pt) -- (2pt,-2pt);
      }
    }
  }]
\coordinate (B) at (1.8,-1.5,1.5);
\coordinate (A) at (2,0,0);
\coordinate(B') at (1.8,-1.5,-1.5);
\coordinate (A') at (-2,0,0);
\coordinate(M) at (1.8,-1.5,0);
\coordinate(X) at (3,0.8,0);
\coordinate(Y) at (-3,0.8,0);
\coordinate(X') at (-3,-2.6,0);
\coordinate(Y') at (3,-2.6,0);
\draw[dashed](B')--(M);
\draw(A)--(A');
\draw[dashed,tick](A')--(B');
\draw[dashed, doubletick](A)--(B');
\draw(M)--(B);
\draw[tick](A')--(B);
\draw[doubletick](A)--(B);
\draw(A')--(M);
\draw(A)--(M);
\draw[fill opacity = 0.05, fill = black, dotted] (X)--(Y)--(X')--(Y')--cycle;
\draw[fill=black] (A) circle [radius=0.5pt] node[above right] {$A$};
\draw[fill=black] (B) circle [radius=0.5pt] node[below right] {$B$};
\draw[fill=black] (A') circle [radius=0.5pt] node[above left] {$A'$};
\draw[fill=black] (B') circle [radius=0.5pt] node[below right] {$B'$};
\draw[fill=black] (M) circle [radius=0.5pt] node[below right] {$M$};
\end{tikzpicture}
\caption{The quadrilateral $ABA'B'$ has reflective symmetry. $B'$ lies behind the gray plane $\pi$.}
\label{theorem2_fig}
\end{figure}

In the other case, refer to figure \ref{theorem2_fig}.  Let $M$ be the midpoint of $BB'$, and $\pi$ be the plane through $A$, $A'$, and $M$. Since $\triangle{BA'B'}$ is isosceles, $$\angle{A'MB} = \angle{B'MA'} = 90^{\circ}.$$ Similarly, since $\triangle{BAB'}$ is isosceles, we also have $$\angle{AMB} = \angle{B'MA} = 90^{\circ}.$$ 
This shows $BB' \perp \pi$, and the mirror symmetry in the plane $\pi$ follows as $M$ is the midpoint of $BB'$.
\end{proof}

To get a Bricard type II octahedron, we start with a pyramid with base $ABA'B'$ and vertex $E$ such that $AB =AB'$ and $A'B = A'B'$. Reflecting the vertex $E$ in the symmetric plane of the boundary and gluing along the common boundary creates the flexible octahedron. Again, we see that this process works for any triangulated polyhedron, giving us a type II twinning. As before, we get infinitely many self-intersecting, flexible polyhedra.

We get an analogous theorem to theorem \ref{formal_statement_I}.

\begin{theorem}
Let $P$ be a polyhedron such that the edges form an isostatic framework. Assume that there are some faces $\triangle{ABA'}$ and $\triangle{BA'B'}$ in $P$, such that $AB = AB'$ and $A'B = A'B'$. Let $T$ be a twinned polyhedron of $P$ around the equator $ABA'B'$. Then $T$  has a single flex.
\end{theorem}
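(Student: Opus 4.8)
The plan is to follow the proof of Theorem~\ref{formal_statement_I}, replacing the half-rotation about the line $l$ with the reflection in the plane $\pi$ furnished by the preceding theorem. Concretely, I would first delete the diagonal $AA'$ of the equator, merging the two triangular faces into a single quadrilateral hole bounded by $ABA'B'$. Since the edges of $P$ form an isostatic framework, the count $3V-E=6$ from the introduction shows that no edge is redundant, so removing one edge raises the degree-of-freedom count to exactly $7$; modulo the six rigid-body motions this is a single internal flex of the cap $P_{cap}$.

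The heart of the argument is that this flex preserves the symmetry of the equator for free. The four boundary edges have fixed lengths satisfying $AB=AB'$ and $A'B=A'B'$, so at every instant the four points $A,B,A',B'$ satisfy the hypothesis of the preceding theorem. Hence the equator automatically admits a reflection $\sigma_t$ in a plane $\pi_t$ through $A$ and $A'$ interchanging $B$ and $B'$; the symmetry is forced by the fixed edge lengths, not imposed as an extra condition. Reflecting $P_{cap}$ in $\pi_t$ produces a congruent cap $P'_{cap}$ whose boundary is setwise the same equator (with $A,A'$ fixed and $B,B'$ swapped), so the two caps glue along the equator to give the twinned polyhedron $T$.

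To exhibit the flex of $T$, I would carry the one-parameter family $P_{cap}(t)$ along and glue to it the reflected family $P'_{cap}(t)=\sigma_t(P_{cap}(t))$; because $\pi_t$ fixes the equator setwise throughout, the boundaries continue to match and the family $T(t)$ is a genuine, non-trivial motion (non-trivial precisely when $P_{cap}$ is not itself symmetric in $\pi$, the symmetric case merely giving a double cover of $P$). For the reverse inequality I would note that any flex of $T$ restricts to a flex of the subframework $P_{cap}$, which has a unique internal degree of freedom; since the positions of the second cap are then pinned along the shared equator, $T$ inherits at most one flex, and with the construction, exactly one.

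The step I expect to demand the most care is the orientation bookkeeping in the gluing, which is the one genuinely new feature compared with the type~I case: the reflection $\sigma_t$ reverses orientation, so I must verify that the boundary orientation induced by $P_{cap}$ is opposite to that induced by $P'_{cap}$, which is exactly the compatibility needed for the union to close up into a (generally self-intersecting) oriented surface. I would also want to pin down the overall rigid motion before running the restriction argument, so that the assertion ``the second cap is determined by the first along the equator'' is a statement about a single configuration rather than one holding only up to an independent rigid motion of $P'_{cap}$.
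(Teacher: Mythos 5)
Your proposal is correct and follows essentially the same route as the paper, whose treatment of this theorem is deliberately terse: it simply reruns the type I argument with the reflection in the plane $\pi$ from the preceding theorem in place of the half-turn about the line $l$ --- remove the edge $AA'$ from the isostatic framework to get a one-degree-of-freedom cap, observe that the fixed boundary lengths $AB=AB'$, $A'B=A'B'$ force the reflective symmetry of the equator at every instant of the flex, and glue the reflected copy along the setwise-preserved equator. Your two closing concerns (orientation of the gluing and fixing the ambient rigid motion) are reasonable points of care, but neither is an obstacle, and the paper does not address them.
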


\section{Examples of twinned polyhedra}
\label{section_examples}
\subsection{A new crinkle}

\begin{figure}
\centering
\subfloat{
\begin{tikzpicture}[line join=bevel,z=-10, scale = 2.5]
\coordinate (A1) at (0,0,-1);
\coordinate (A2) at (-1,0,0);
\coordinate (A3) at (0,0,1);
\coordinate (A4) at (1,0,0);
\coordinate (B1) at (-1.3,1,-0);
\coordinate (B2) at (1.3, -1,-0);
%\coordinate (C1) at (-1.3,-1,0);
%\coordinate (C2) at (1.3, 1,0);

\draw (A1) -- (A2) -- (B1) -- cycle;
%\draw(A1)--(C1)--(A2)--cycle;
%\draw[fill opacity=0.6,fill=purple!70!black](A1)--(B1)--(A2)--cycle;
\draw [fill opacity=0.7,fill=white](B1) -- (A1) -- (B2) -- cycle;
\draw[fill opacity=0.7,fill=white] (A4) -- (A1) -- (B2) -- cycle;
\draw[fill opacity=0.7,fill=white](B2) -- (B1) -- (A3) -- cycle;
\draw[fill opacity=0.7,fill=white] (B1) --(A2)--(A3)--cycle;
\draw[fill opacity=0.7,fill=white](A3)--(B2)--(A4)--cycle;

%\draw[fill opacity=1,fill=purple!70!black](A4)--(C2)--(A1)--cycle;
%\draw(A1)--(C1)--(C2)--cycle;
%\draw [fill opacity=1,fill=purple!70!black]  (A3)--(C1)--(A2)--cycle;
%\draw[fill opacity=0,fill=orange!80!black] (A3)--(C1)--(C2)--cycle;
%\draw[fill opacity=1,fill=purple!70!black] (A3)--(C2)--(A4)--cycle;
\draw[blue](A1)--(A2)--(A3)--(A4)--cycle;
\end{tikzpicture}}
\qquad
\subfloat{
\begin{tikzpicture}[line join=bevel,z=-10, scale = 2.5]
\coordinate (A1) at (0,0,-1);
\coordinate (A2) at (-1,0,0);
\coordinate (A3) at (0,0,1);
\coordinate (A4) at (1,0,0);
\coordinate (B1) at (-1.3,1,-0);
\coordinate (B2) at (1.3, -1,-0);
\coordinate (C1) at (-1.3,-1,0);
\coordinate (C2) at (1.3, 1,0);

\draw (A1) -- (A2) -- (B1) -- cycle;
\draw(A1)--(C1)--(A2)--cycle;
\draw (0,0,0)--(C1);
\draw [fill opacity=0.7,fill=white](B1) -- (A1) -- (B2) -- cycle;
\draw[fill opacity=0.7,fill=white] (A4) -- (A1) -- (B2) -- cycle;
\draw[fill opacity=0.7,fill=white](B2) -- (B1) -- (A3) -- cycle;
\draw[fill opacity=0.7,fill=white] (B1) --(A2)--(A3)--cycle;
\draw[fill opacity=0.7,fill=white](A3)--(B2)--(A4)--cycle;
\draw[fill opacity=0.7,fill=white](A4)--(C2)--(A1)--cycle;
%\draw(A1)--(C1)--(C2)--cycle;
\draw [fill opacity=0.7,fill=white]  (A3)--(C1)--(A2)--cycle;
%\draw[fill opacity=0.7,fill=orange!70!black] (A1)--(C1)--(C2)--cycle;
\draw[fill opacity = 0.7, fill =white,dotted] (A3)--(A1)--(C2)--cycle;
\draw (A3)--(C2);
\draw (C2)--(A1);
\draw[fill opacity=0.7,fill=white] (A3)--(C2)--(A4)--cycle;
\draw (0,0,0)--(C2);
%\draw[fill opacity=0,fill=orange!70!black] (A3)--(C1)--(C2)--cycle;

\draw[blue](A1)--(A2)--(A3)--(A4)--cycle;
\end{tikzpicture}}
\qquad
\subfloat{
\begin{tikzpicture}[line cap=round,line join=round,>=triangle 45, scale = 0.6]
\clip(-11,-6.5) rectangle (11,6.5);
\draw [line width=1pt,dash pattern=on 4pt off 4pt] (-3,0)-- (0,6.324555320336759);
\draw [line width=1pt,dash pattern=on 4pt off 4pt] (0,6.324555320336759)-- (3,0);
\draw [line width=1pt] (3,0)-- (0,-6.324555320336759);
\draw [line width=1pt] (0,-6.324555320336759)-- (-3,0);
\draw [line width=1pt] (-3,0)-- (-4.276409071052282,3.3661223809207312);
\draw [line width=1pt] (-4.276409071052282,3.3661223809207312)-- (0,6.324555320336759);
\draw [line width=1pt] (-1.9377787402487041,4.983994031834671) -- (-1.971818318079029,4.604827896295821);
\draw [line width=1pt] (-1.9377787402487041,4.983994031834671) -- (-2.304590752973254,5.0858498049616685);
\draw [line width=1pt] (4.276409071052282,3.3661223809207312)-- (3,0);
\draw [line width=1pt,dash pattern=on 4pt off 4pt] (-4.276409071052282,-3.3661223809207312)-- (-3,0);
\draw [line width=1pt,dash pattern=on 4pt off 4pt] (3,0)-- (4.276409071052282,-3.3661223809207312);
\draw [line width=1pt] (0,6.324555320336759)-- (0,-6.324555320336759);
\draw [line width=1pt] (-4.276409071052282,-3.3661223809207312)-- (-9.439213330335313,-2.745274428255211);
\draw [line width=1pt] (-6.800433022716626,-3.258972994724889) -- (-6.753876599290562,-2.8718223242119123);
\draw [line width=1pt] (-6.961745802097032,-3.239574484964031) -- (-6.915189378670967,-2.852423814451054);
\draw [line width=1pt] (-9.439213330335313,-2.745274428255211)-- (-3,0);
\draw [line width=1pt] (-4.276409071052282,3.3661223809207312)-- (-9.439213330335313,2.745274428255211);
\draw [line width=1pt] (-6.834532988980764,2.862123069331484) -- (-6.881089412406829,3.249273739844459);
\draw [line width=1pt] (-9.439213330335313,2.745274428255211)-- (-3,0);
\draw [line width=1pt] (4.276409071052282,-3.3661223809207312)-- (9.439213330335313,-2.745274428255211);
\draw [line width=1pt] (6.753876599290564,-2.8718223242119123) -- (6.800433022716628,-3.258972994724889);
\draw [line width=1pt] (6.915189378670973,-2.852423814451054) -- (6.961745802097037,-3.239574484964031);
\draw [line width=1pt] (9.439213330335313,-2.745274428255211)-- (3,0);
\draw [line width=1pt] (3,0)-- (9.439213330335313,2.745274428255211);
\draw [line width=1pt] (9.439213330335313,2.745274428255211)-- (4.276409071052282,3.3661223809207312);
\draw [line width=1pt] (6.834532988980769,2.862123069331484) -- (6.8810894124068325,3.249273739844459);
\draw [line width=1pt] (4.276409071052282,-3.3661223809207312)-- (0,-6.324555320336759);
\draw [line width=1pt] (1.7373529449712684,-5.122649213040598) -- (1.7713925228015917,-4.743483077501747);
\draw [line width=1pt] (1.7373529449712684,-5.122649213040598) -- (2.1041649576958164,-5.224504986167594);
\draw [line width=1pt] (2.1382045355261434,-4.845338850628745) -- (2.1722441133564665,-4.466172715089894);
\draw [line width=1pt] (2.1382045355261434,-4.845338850628745) -- (2.5050165482506914,-4.947194623755742);
\draw [line width=1pt] (4.276409071052282,3.3661223809207312)-- (0,6.324555320336759);
\draw [line width=1pt] (1.9377787402487077,4.983994031834671) -- (2.304590752973256,5.0858498049616685);
\draw [line width=1pt] (1.9377787402487077,4.983994031834671) -- (1.9718183180790307,4.604827896295821);
\draw [line width=1pt] (-4.276409071052282,-3.3661223809207312)-- (0,-6.324555320336759);
\draw [line width=1pt] (-1.7373529449712684,-5.122649213040598) -- (-2.104164957695818,-5.224504986167594);
\draw [line width=1pt] (-1.7373529449712684,-5.122649213040598) -- (-1.7713925228015934,-4.743483077501747);
\draw [line width=1pt] (-2.1382045355261416,-4.845338850628745) -- (-2.5050165482506914,-4.947194623755742);
\draw [line width=1pt] (-2.1382045355261416,-4.845338850628745) -- (-2.1722441133564665,-4.466172715089894);
\end{tikzpicture}}
\caption{A flexible dodecahedron constructed by twinning. The original polyhedron is self-intersecting, as the two peaks of the digonal anticupola lie on different sides of the quadrilateral base. The equator is drawn in blue, and the polyhedron self-intersects along the dotted line. On the bottom, a net for assembling a crinkle obtained from this polyhedron. Labels show gluing instructions.}
\label{crinkle_twinning}
\end{figure}
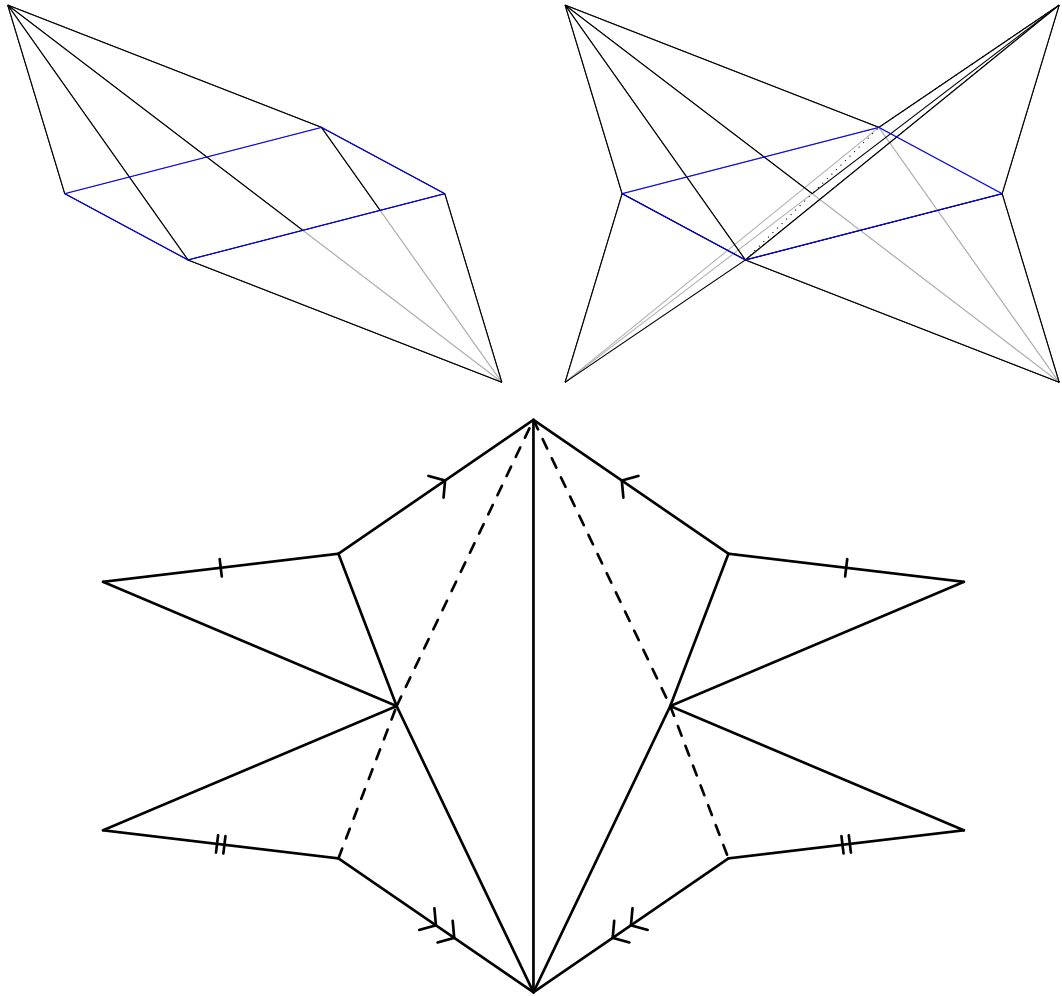

We next focus on a particular case. Let the starting shape be a digonal anticupola, as depicted in the top left corner of figure \ref{twinned_digonal_anticupola}. We realise the square face by adding a diagonal in that square as an edge, and when twinning, we remove that edge. The twinning action results in a flexible, triangulated dodecahedron. This flexible dodecahedron has been described before in \cite{flexible_suspensions}, but not as a part of this larger family of twinned polyhedra.

If we start out with the two vertices of the digonal anticupola on different sides of the base, we get a star-shaped flexible dodecahedron, as seen in figure \ref{crinkle_twinning}. This dodecahedron has a self-intersection from the rotated edge going through its non-rotated version. Removing this edge, we get a new crinkle, with a net given on the bottom in figure \ref{crinkle_twinning}. This crinkle looks similar to the Bricard crinkle, but has an indentation on one of the sides, meaning it can be constructed to take less space. The parameters are chosen so that the crinkle can lie flat.

\subsection{Methods of removing self-intersections}

One type of self-intersection is a vertex passing through a face. We can fix this by removing the face, adding a vertex above the face, and adding a cone on the face with the new vertex as apex. This is called \textit{erecting a tent}.

Another type of self-intersection is an edge lying below another edge. This can often be remedied by swapping out edges for crinkles. The Bricard crinkle is the basis of most known methods of constructing flexible polyhedra, as discussed in \cite{Li}. It is a surface with the topology of a disk that has a range of motion such that two of the vertices keep a fixed distance throughout the motion, as if they were connected by a phantom edge. This gadget may be used to remove double covers of an edge, or a hinge participating in a flexing motion. This is done by swapping the edge for two opposing crinkles, arranged such that they do not intersect. We see this implemented when Steffen's polyhedron is constructed by adding crinkles to the system of a tetrahedron and a triangle glued together along an edge. The edge corresponds to a hinge, and the crinkles take the place of it. This can be considered as an example of the base+crinkle method, as discussed in \cite{Zeyuan_crinkle_base_methods}. Here, you start out with a flexible base that is not a polyhedron, and turn it into a polyhedron by adding crinkles. The crinkles all have zero volume, so the base must have positive volume in order for the result to stand a chance of being embedded.

To summarise, the two potential issues are that of vertices sticking through faces, and edges meeting edges. The first problem can be resolved by erecting a tent on the face, and the second by swapping the edges for crinkles. This is the idea of Connelly's methods as described in \cite{p1}. We may apply this method using crinkles arising from our twinned polyhedra, giving fundamentally different constructions of flexible polyhedra. We can also use our twinned polyhedra to obtain new crinkles by removing edges in the right way.

Some trial and error is necessary to find a good choice of parameters.

\subsection{A novel flexible polyhedron}
\label{foxtrot_section}

\begin{figure}
\centering
\subfloat{
\begin{tikzpicture}[line join=bevel,scale = 3]
\coordinate (A2) at (0,0,-1);
\coordinate (A3) at (-1,0,0);
\coordinate (A4) at (0,0,1);
\coordinate (A1) at (1,0,0);
\coordinate (B1) at (0.2,0.7,-0.4);
\coordinate (B2) at (0.2, 0.7,0.4);
\coordinate (C1) at (-0.2,0.7,-0.4);
\coordinate (C2) at (-0.2, 0.7,0.4);

\draw[fill opacity=0.7,fill=white] (A3)--(C1)--(C2)--cycle;
\draw [fill opacity=0.7,fill=white]  (A3)--(C1)--(A2)--cycle;
\draw [fill opacity=0.7,fill=white]  (A1)--(B1)--(A2)--cycle;
\draw [fill opacity=0.7,fill=white]  (A2)--(B1)--(A3)--cycle;
\draw[blue,thick](C1)--(A2)--(A1);

\draw[fill opacity=0.7,fill=white](B2) -- (B1) -- (A3) -- cycle;
\draw[fill opacity=0.7,fill=white](A3)--(B2)--(A4)--cycle;

\draw[fill opacity=0.7,fill=white] (A3)--(C2)--(A4)--cycle;
\draw[fill opacity=0.7,fill=white] (A4) -- (A1) -- (B2) -- cycle;
\draw[fill opacity=0.7,fill=white] (B1) -- (A1) -- (B2) -- cycle;

\draw[blue,thick](C1)--(C2)--(A4)--(A1);
\draw[dashed](C1)--(A1);
\draw[dashed](C2)--(A1);

\end{tikzpicture}}
\qquad
\subfloat{
\begin{tikzpicture}[line join=bevel, scale = 3]
\coordinate (A2) at (0,0,-1);
\coordinate (A4) at (0,0,1);
\coordinate (A1) at (1,0,0);
\coordinate (C1) at (-0.2,0.7,-0.4);
\coordinate (C2) at (-0.2, 0.7,0.4);

\coordinate (X1) at (0.5,0.7,-1);
\coordinate (X2) at (1,1.3,1);
\coordinate (X3) at (0.5,0.7,0);

\draw[blue](A2)--(C1)--(C2)--(A4)--(A1)--cycle;

\draw(A2)--(X1)--(A1)--cycle;
\draw(A2)--(X1)--(C1)--cycle;
\draw[fill opacity=0.6,fill=white] (A1)--(X1)--(C1)--cycle;
\draw[blue, thick](C1)--(C2)--(A4)--(A1)--(A2)--cycle;
\draw(A1)--(C1)--(X2)--cycle;
\draw[fill opacity=0.6,fill=white] (A1)--(C2)--(X2)--cycle;
\draw[fill opacity=0.6,fill=white](C1)--(C2)--(X2)--cycle;

\draw(A1)--(C2)--(X3)--cycle;
\draw[fill opacity=0.6,fill=white](A4)--(C2)--(X3)--cycle;
\draw[fill opacity=0.6,fill=white] (A4)--(A1)--(X3)--cycle;
\draw[blue, thick](C1)--(C2)--(A4)--(A1);
\draw[dashed](C1)--(A1);
\draw[dashed](C2)--(A1);

\end{tikzpicture}}

\caption{Left: A  twinned digonal anticupola with two edges removed gives a pentagonal crinkle without self-intersections. The boundary is coloured blue, and the removed edges are dashed. Right: a tetrahedral chain as base to glue the pentagonal crinkle onto, gluing blue on blue, and so that the dashed lines meet.}
\label{Gluing_crinkle}
\end{figure}
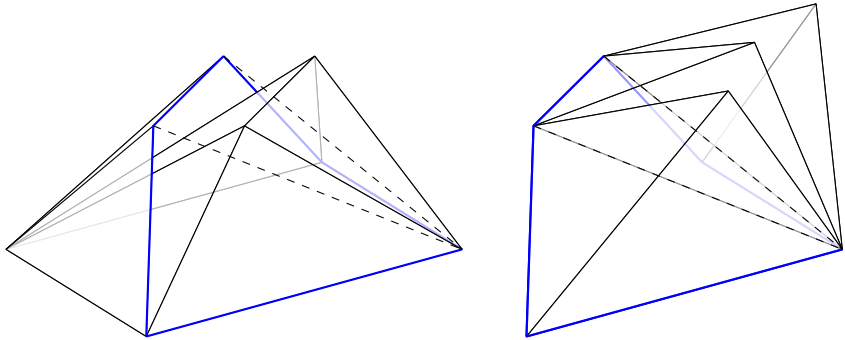

Starting with a twinned digonal anticupola, we may remove two edges to create a crinkle, as seen in figure \ref{Gluing_crinkle}. Whereas the original Bricard crinkle has a quadrilateral boundary and has a movement like two triangles glued together on an edge, this crinkle of pentagonal boundary may be regarded as three triangles glued together two at an edge, all flexing in a joined motion. Also note that in creating it we have removed two edges, so the system itself has two degrees of freedom, but we will only use one of them.

To make a flexible polyhedron, we glue the pentagonal crinkle depicted in figure \ref{Gluing_crinkle} to three linked tetrahedral caps as the figure shows. The tetrahedra are not closed, their bottoms are missing, and these holes align with the holes in the crinkle. We get rid of further self-intersections by adding a Bricard crinkle between each of the two adjacent tetrahedra. Finally, we dispose of any additional vertices poking through faces by adding tents on those faces. By trial and error, we found a selection of edge lengths without self-intersection that showed a reasonable range of motion. A picture of the assembled model can be see in figure \ref{foxtrot}, and we also  include a net for assembling the polyhedron in figure \ref{net}. The model bears a resemblance to a fox, and the working name for the polyhedron was the foxtrot, after a popular American dance. A three dimensional version, showing the flexing motion, is available online at \url{https://www.geogebra.org/m/fq3vdnmy}. 

This construction bears a likeness to that of Steffen's polyhedron, and is an example of the base+crinkle method outlined in \cite{Zeyuan_crinkle_base_methods}. Here, the base is three linked tetrahedra, and the crinkles are two Bricard crinkles and one pentagonal crinkle. Note that the volume of the polyhedron comes from the tetrahedra, and remains constant during the flexing motion as predicted by the Bellows theorem.

\begin{figure}
\centering
\definecolor{ccqqqq}{rgb}{0.8,0,0}
\definecolor{qqqqff}{rgb}{0,0,1}
\begin{tikzpicture}[line cap=round,line join=round,>=triangle 45, scale = 1.2]
\clip(-6.5,-6) rectangle (7,6);
\draw [line width=1pt,dash pattern=on 4pt off 4pt] (-1.5,0)-- (0,2);
\draw [line width=1pt,dash pattern=on 4pt off 4pt] (0,2)-- (1.5,0);
\draw [line width=1pt,dash pattern=on 4pt off 4pt] (1.5,0)-- (-1.5,0);
\draw [line width=1pt,color=qqqqff] (-1.5,0)-- (-2.2443880443810174,0.8987632832857632);
\draw [line width=1pt,color=qqqqff] (-1.9598343540106158,0.3767947698539381) -- (-1.7845536903704005,0.5219685134318242);
\draw [line width=1pt] (-2.2443880443810174,0.8987632832857632)-- (0,2);
\draw [line width=1pt] (0,2)-- (2.2443880443810174,0.8987632832857632);
\draw [line width=1pt,color=ccqqqq] (2.2443880443810174,0.8987632832857632)-- (1.5,0);
\draw [line width=1pt,color=ccqqqq] (1.9598343540106158,0.3767947698539381) -- (1.7845536903704005,0.5219685134318242);
\draw [line width=1pt,dash pattern=on 4pt off 4pt] (-2.2443880443810174,0.8987632832857632)-- (-1.1334238941470451,4.2283065938457725);
\draw [line width=1pt,dash pattern=on 4pt off 4pt] (-1.1334238941470451,4.2283065938457725)-- (0,2);
\draw [line width=1pt,dash pattern=on 4pt off 4pt] (2.2443880443810174,0.8987632832857632)-- (1.1334238941470451,4.2283065938457725);
\draw [line width=1pt,dash pattern=on 4pt off 4pt] (1.1334238941470451,4.2283065938457725)-- (0,2);
\draw [line width=1pt,color=ccqqqq] (0.0015086767192895522,4.49999954477887)-- (1.1334238941470451,4.2283065938457725);
\draw [line width=1pt,color=ccqqqq] (0.7057834060871914,4.330952891333921) -- (0.527626071859088,4.198172524527493);
\draw [line width=1pt,color=ccqqqq] (0.7057834060871914,4.330952891333921) -- (0.607306499007248,4.530133614097149);
\draw [line width=1pt] (0.0015086767192895522,4.49999954477887)-- (0,2);
\draw [line width=1pt,dash pattern=on 4pt off 4pt] (-1.1334238941470451,4.2283065938457725)-- (-3.3778119385280627,3.1270698771315355);
\draw [line width=1pt] (-3.3778119385280627,3.1270698771315355)-- (-2.2443880443810174,0.8987632832857632);
\draw [line width=1pt] (2.2443880443810174,0.8987632832857632)-- (3.3778119385280627,3.1270698771315355);
\draw [line width=1pt,dash pattern=on 4pt off 4pt] (3.3778119385280627,3.1270698771315355)-- (1.1334238941470451,4.2283065938457725);
\draw [line width=1pt,color=qqqqff] (-2.2443880443810174,0.8987632832857632)-- (-3.379320615247351,0.6270703323526654);
\draw [line width=1pt,color=qqqqff] (-2.7853609685300977,0.6522471031773669) -- (-2.83834769109827,0.8735865124610628);
\draw [line width=1pt,dash pattern=on 4pt off 4pt] (-3.379320615247351,0.6270703323526654)-- (-3.3778119385280627,3.1270698771315355);
\draw [line width=1pt] (-3.3778119385280627,3.1270698771315355)-- (-1.8778119385280638,5.127069877131537);
\draw [line width=1pt,color=qqqqff] (-1.8778119385280638,5.127069877131537)-- (-1.1334238941470451,4.2283065938457725);
\draw [line width=1pt,color=qqqqff] (-1.414884326601375,4.56813782071352) -- (-1.6370784140677155,4.56880792780524);
\draw [line width=1pt,color=qqqqff] (-1.414884326601375,4.56813782071352) -- (-1.3741574186073924,4.786568543172069);
\draw [line width=1pt] (1.8778119385280638,5.127069877131537)-- (3.3778119385280627,3.1270698771315355);
\draw [line width=1pt,dash pattern=on 4pt off 4pt] (3.3778119385280627,3.1270698771315355)-- (3.379320615247351,0.6270703323526654);
\draw [line width=1pt,color=ccqqqq] (3.379320615247351,0.6270703323526654)-- (2.2443880443810174,0.8987632832857632);
\draw [line width=1pt,color=ccqqqq] (2.7853609685300977,0.6522471031773669) -- (2.8383476910982677,0.8735865124610628);
\draw [line width=1pt] (-3.379320615247351,0.6270703323526654)-- (-4.7477028206640774,-0.2021040400798518);
\draw [line width=1pt] (-3.9234356381746216,0.1643042283720446) -- (-4.041382239459487,0.3589508983048242);
\draw [line width=1pt] (-4.004538417313281,0.11515981117001697) -- (-4.122485018598146,0.30980648110279657);
\draw [line width=1pt] (-4.085641196451939,0.06601539396798935) -- (-4.203587797736805,0.26066206390076896);
\draw [line width=1pt] (-4.7477028206640774,-0.2021040400798518)-- (-3.3778119385280627,3.1270698771315355);
\draw [line width=1pt] (-4.008629248231338,1.5940276754672704) -- (-3.904903671266356,1.3975291608881621);
\draw [line width=1pt] (-4.008629248231338,1.5940276754672704) -- (-4.220611087925785,1.5274366761635219);
\draw [line width=1pt] (-4.116885510960803,1.3309381615844138) -- (-4.013159933995821,1.1344396470053055);
\draw [line width=1pt] (-4.116885510960803,1.3309381615844138) -- (-4.32886735065525,1.2643471622806652);
\draw [line width=1pt] (-3.900372985501871,1.857117189350127) -- (-3.7966474085368893,1.660618674771019);
\draw [line width=1pt] (-3.900372985501871,1.857117189350127) -- (-4.112354825196318,1.7905261900463785);
\draw [line width=1pt,color=qqqqff] (3.379320615247351,0.6270703323526654)-- (4.745041552661942,-0.20783263409824482);
\draw [line width=1pt,color=qqqqff] (4.040626268935121,0.3561722734927234) -- (3.921917108598646,0.16198972504209272);
\draw [line width=1pt,color=qqqqff] (4.1215356641228835,0.30671012335252545) -- (4.002826503786409,0.11252757490189477);
\draw [line width=1pt,color=qqqqff] (4.202445059310646,0.2572479732123275) -- (4.083735898974172,0.06306542476169684);
\draw [line width=1pt] (4.745041552661942,-0.20783263409824482)-- (3.3778119385280627,3.1270698771315355);
\draw [line width=1pt] (3.974121506115451,1.3725802075341016) -- (4.1847044566954255,1.4589141394242);
\draw [line width=1pt] (3.938149034494576,1.4603231036090907) -- (4.1487319850745505,1.546657035499189);
\draw [line width=1pt] (-1.5,0)-- (0,-2.598076211353316);
\draw [line width=1pt] (0,-2.598076211353316)-- (1.5,0);
\draw [line width=1pt] (0,-2.598076211353316)-- (-3.0985357137665965,-0.06843662625172908);
\draw [line width=1pt] (-3.0985357137665965,-0.06843662625172908)-- (-1.5,0);
\draw [line width=1pt] (-2.3988790394169284,0.07541803375798443) -- (-2.389144214344955,-0.1519670142363599);
\draw [line width=1pt] (-2.3041352694192847,0.079474210871307) -- (-2.2944004443473114,-0.14791083712303732);
\draw [line width=1pt] (-2.209391499421641,0.08353038798462956) -- (-2.1996566743496677,-0.14385466000971475);
\draw [line width=1pt,color=qqqqff] (1.5,0)-- (3.0985357137665965,-0.06843662625172908);
\draw [line width=1pt,color=qqqqff] (2.20939149942164,0.08353038798462956) -- (2.199656674349667,-0.14385466000971475);
\draw [line width=1pt,color=qqqqff] (2.3041352694192847,0.079474210871307) -- (2.2944004443473114,-0.14791083712303732);
\draw [line width=1pt,color=qqqqff] (2.3988790394169293,0.07541803375798443) -- (2.389144214344956,-0.1519670142363599);
\draw [line width=1pt] (0,-2.598076211353316)-- (3.0985357137665965,-0.06843662625172908);
\draw [line width=1pt,dash pattern=on 4pt off 4pt] (-3.0985357137665965,-0.06843662625172908)-- (-3.3778099341942704,-1.6438749493239184);
\draw [line width=1pt,dash pattern=on 4pt off 4pt] (-3.3778099341942704,-1.6438749493239184)-- (0,-2.598076211353316);
\draw [line width=1pt,dash pattern=on 4pt off 4pt] (-1.6579690911675815,-2.0114646069244686) -- (-1.7198408430266885,-2.230486553752766);
\draw [line width=1pt,dash pattern=on 4pt off 4pt] (3.0985357137665965,-0.06843662625172908)-- (3.3734095313113426,-1.647949113119546);
\draw [line width=1pt,dash pattern=on 4pt off 4pt] (3.3734095313113426,-1.647949113119546)-- (0,-2.598076211353316);
\draw [line width=1pt,dash pattern=on 4pt off 4pt] (-3.3778099341942704,-1.6438749493239184)-- (-6.239019285519133,-2.545806791477837);
\draw [line width=1pt,color=qqqqff] (-6.239019285519133,-2.545806791477837)-- (-3.0985357137665965,-0.06843662625172908);
\draw [line width=1pt,color=qqqqff] (-4.445417145057547,-1.1309239145119867) -- (-4.451378645738528,-1.3530390244395754);
\draw [line width=1pt,color=qqqqff] (-4.445417145057547,-1.1309239145119867) -- (-4.662815998961884,-1.085006598937195);
\draw [line width=1pt,color=qqqqff] (-4.668777499642864,-1.3071217088647837) -- (-4.674739000323845,-1.5292368187923722);
\draw [line width=1pt,color=qqqqff] (-4.668777499642864,-1.3071217088647837) -- (-4.886176353547201,-1.2612043932899921);
\draw [line width=1pt,color=qqqqff] (-6.239019285519133,-2.545806791477837)-- (-5.444722241093987,-3.8182423321703625);
\draw [line width=1pt,color=qqqqff] (-5.77044602355768,-3.081543733299535) -- (-5.963511257111676,-3.2020615430345005);
\draw [line width=1pt,color=qqqqff] (-5.720230269501445,-3.1619875806136983) -- (-5.913295503055441,-3.282505390348664);
\draw [line width=1pt] (-5.444722241093987,-3.8182423321703625)-- (-3.3778099341942704,-1.6438749493239184);
\draw [line width=1pt,color=ccqqqq] (3.0985357137665965,-0.06843662625172908)-- (6.2335282200037705,-2.5533336096691106);
\draw [line width=1pt,color=ccqqqq] (4.888981639444667,-1.4876022768355188) -- (4.671476507839865,-1.53301350093366);
\draw [line width=1pt,color=ccqqqq] (4.888981639444667,-1.4876022768355188) -- (4.883537098489985,-1.2654738938622778);
\draw [line width=1pt,color=ccqqqq] (4.666031966885182,-1.31088511796042) -- (4.448526835280379,-1.3562963420585612);
\draw [line width=1pt,color=ccqqqq] (4.666031966885182,-1.31088511796042) -- (4.660587425930499,-1.088756734987179);
\draw [line width=1pt,dash pattern=on 4pt off 4pt] (6.2335282200037705,-2.5533336096691106)-- (3.3734095313113426,-1.647949113119546);
\draw [line width=1pt,color=ccqqqq] (5.437695999285861,-3.8248095537857116)-- (6.2335282200037705,-2.5533336096691106);
\draw [line width=1pt,color=ccqqqq] (5.713995887894266,-3.1688878021928537) -- (5.90691552292392,-3.2896385425243233);
\draw [line width=1pt,color=ccqqqq] (5.7643086963657115,-3.0885046209305003) -- (5.957228331395365,-3.20925536126197);
\draw [line width=1pt,dash pattern=on 4pt off 4pt] (0,-2.598076211353316)-- (2.8783733720279816,-4.606823764233036);
\draw [line width=1pt,dash pattern=on 4pt off 4pt] (1.504311694272226,-3.509131098938839) -- (1.3740616777557575,-3.695768876647513);
\draw [line width=1pt] (2.8783733720279816,-4.606823764233036)-- (3.3734095313113426,-1.647949113119546);
\draw [line width=1pt] (3.1493637290103518,-2.9870905731625164) -- (3.2942464902861905,-3.1555531714851184);
\draw [line width=1pt] (3.1493637290103518,-2.9870905731625164) -- (2.9575364130531328,-3.0992197058674624);
\draw [line width=1pt] (-5.444722241093987,-3.8182423321703625)-- (-4.505589579804996,-5.220271609503847);
\draw [line width=1pt] (-4.505589579804996,-5.220271609503847)-- (-1.708240758569396,-4.1363714423612254);
\draw [line width=1pt] (-2.841641022564259,-4.575534669434525) -- (-2.9126059819769217,-4.786092585657291);
\draw [line width=1pt] (-2.841641022564259,-4.575534669434525) -- (-3.035950209774532,-4.4677636097097695);
\draw [line width=1pt] (-3.106915169187195,-4.678321525932534) -- (-3.1778801285998575,-4.8888794421553);
\draw [line width=1pt] (-3.106915169187195,-4.678321525932534) -- (-3.301224356397468,-4.570550466207778);
\draw [line width=1pt,color=qqqqff] (-5.444722241093987,-3.8182423321703625)-- (-6.00555057674283,-5.209454601356255);
\draw [line width=1pt,color=qqqqff] (-5.601864973689079,-4.512418919394543) -- (-5.812952068542918,-4.427325057942975);
\draw [line width=1pt,color=qqqqff] (-5.6373207492939,-4.600371875583641) -- (-5.848407844147738,-4.515278014132073);
\draw [line width=1pt,color=ccqqqq] (-6.00555057674283,-5.209454601356255)-- (-4.505589579804996,-5.220271609503847);
\draw [line width=1pt,color=ccqqqq] (-5.302163497515396,-5.100727468433687) -- (-5.303804750166382,-5.32831488715517);
\draw [line width=1pt,color=ccqqqq] (-5.2073354063814445,-5.10141132370493) -- (-5.20897665903243,-5.328998742426413);
\draw [line width=1pt] (-3.3778099341942704,-1.6438749493239184)-- (-1.708240758569396,-4.1363714423612254);
\draw [line width=1pt] (-2.463862259009657,-3.00830561112303) -- (-2.684844244718382,-2.985118900689181);
\draw [line width=1pt] (-2.463862259009657,-3.00830561112303) -- (-2.4012064480452815,-2.7951274909959625);
\draw [line width=1pt] (5.437695999285861,-3.8248095537857116)-- (3.3734095313113426,-1.647949113119546);
\draw [line width=1pt] (4.209795329893984,-2.529946445522737) -- (4.43153378035423,-2.5157084282449116);
\draw [line width=1pt] (4.209795329893984,-2.529946445522737) -- (4.183814314838358,-2.7506173507304545);
\draw [line width=1pt] (4.405552765298602,-2.7363793334526294) -- (4.627291215758846,-2.722141316174804);
\draw [line width=1pt] (4.405552765298602,-2.7363793334526294) -- (4.3795717502429765,-2.9570502386603468);
\draw [line width=1pt,color=qqqqff] (-1.8778119385280638,5.127069877131537)-- (-5.826445920962462,5.766045363957334);
\draw [line width=1pt,color=qqqqff] (-4.132967299471872,5.492003421500843) -- (-3.9652806340347224,5.637783542858604);
\draw [line width=1pt,color=qqqqff] (-4.132967299471872,5.492003421500843) -- (-4.019815595182412,5.300777499186674);
\draw [line width=1pt,color=qqqqff] (-3.8521289297452634,5.446557620544436) -- (-3.6844422643081134,5.592337741902195);
\draw [line width=1pt,color=qqqqff] (-3.8521289297452634,5.446557620544436) -- (-3.738977225455803,5.255331698230266);
\draw [line width=1pt,color=qqqqff] (0.0015086767192895522,4.49999954477887)-- (-1.1334238941470451,4.2283065938457725);
\draw [line width=1pt,color=qqqqff] (-0.7042947395161884,4.331036367707213) -- (-0.6056976506400066,4.530157626275093);
\draw [line width=1pt,color=qqqqff] (-0.7042947395161884,4.331036367707213) -- (-0.5262175667877486,4.198148512349549);
\draw [line width=1pt,color=ccqqqq] (1.8778119385280638,5.127069877131537)-- (1.1334238941470451,4.2283065938457725);
\draw [line width=1pt,color=ccqqqq] (1.414884326601375,4.5681378207135195) -- (1.3741574186073935,4.786568543172068);
\draw [line width=1pt,color=ccqqqq] (1.414884326601375,4.5681378207135195) -- (1.6370784140677164,4.56880792780524);
\draw [line width=1pt,color=ccqqqq] (1.8778119385280638,5.127069877131537)-- (5.830987072392624,5.759010443570854);
\draw [line width=1pt,color=ccqqqq] (4.135324428834794,5.48794782297494) -- (4.021806564721817,5.296939037638397);
\draw [line width=1pt,color=ccqqqq] (4.135324428834794,5.48794782297494) -- (3.967917369573321,5.634048945687738);
\draw [line width=1pt,color=ccqqqq] (3.8543995054603437,5.443040160351194) -- (3.7408816413473667,5.252031375014651);
\draw [line width=1pt,color=ccqqqq] (3.8543995054603437,5.443040160351194) -- (3.6869924461988712,5.5891412830639915);
\draw [line width=1pt] (5.830987072392624,5.759010443570854)-- (3.3778119385280627,3.1270698771315355);
\draw [line width=1pt] (4.7199722874040795,4.400135353825997) -- (4.553484805648035,4.55531475094141);
\draw [line width=1pt] (4.655314205272655,4.330765569760978) -- (4.48882672351661,4.485944966876392);
\draw [line width=1pt] (-5.826445920962462,5.766045363957334)-- (-3.3778119385280627,3.1270698771315355);
\draw [line width=1pt] (-4.505376711306744,4.3422844892447445) -- (-4.727256687304892,4.330454958418213);
\draw [line width=1pt] (-4.505376711306744,4.3422844892447445) -- (-4.477001172185634,4.562660282670657);
\draw [line width=1pt] (-4.698881148183782,4.550830751844125) -- (-4.9207611241819285,4.539001221017593);
\draw [line width=1pt] (-4.698881148183782,4.550830751844125) -- (-4.670505609062671,4.7712065452700365);
\draw [line width=1pt] (-4.3118722744297076,4.133738226645364) -- (-4.533752250427854,4.121908695818832);
\draw [line width=1pt] (-4.3118722744297076,4.133738226645364) -- (-4.2834967353085975,4.354114020071276);
\end{tikzpicture}
\caption{A net for a flexible polyhedron constructed by adding tents and crinkles on a flexible dodecahedron. Dashed lines correspond to valley folds and solid lines to mountain folds. Gluing instructions are indicated by a symbol and colour.}
\label{net}
\end{figure}
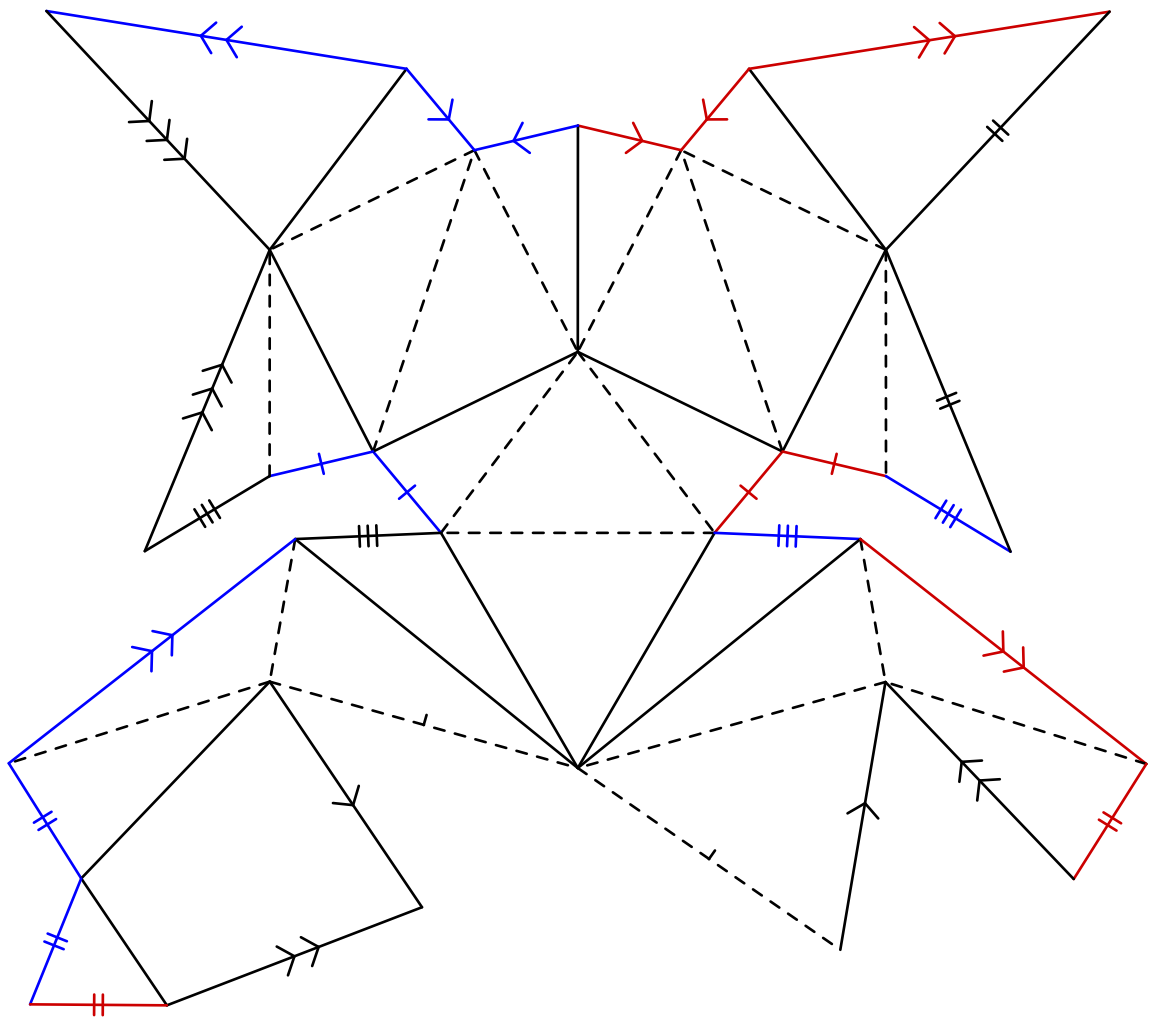

\section*{Conclusion}
We have generalised Raoul Bricard's 1897 construction of flexible octahedra, creating infinitely many flexible polyhedra with self-intersection. Using these, Connelly's methods from \cite{p1} may be used to remove the self-intersections in many cases. We described two explicit examples of notable flexible surfaces without self-intersections. Many further examples of twinned polyhedra could be considered. Each such example leads to a crinkle by the removal of an edge.

We have only generalised the simpler Bricard octahedra, types I and II, but further research could consider possible generalisations of the type III Bricard octahedron in a similar way.

\section*{Acknowledgements}

This project began when the two authors worked together on a research project at the Engineering Department at the University of Cambridge in the summer of 2022. The project was funded by the Cambridge Mathematics Department and a CUED–MathWorks Grant. Our collaborator Zeyuan He provided helpful discussion and advice throughout.

\bibliographystyle{alphaurl}
\bibliography{bibliography_flexible_polyhedra}

\end{document}